\newtheorem{theorem}{Theorem}[section]
\newtheorem{conjecture}[theorem]{Conjecture}
\newtheorem{corollary}[theorem]{Corollary}
\newtheorem{lemma}[theorem]{Lemma}
\theoremstyle{definition}
\numberwithin{equation}{section}
\begin{document}

\title{A density Chinese remainder theorem}
\author{D. Jason Gibson}

\address{Department of Mathematics and Statistics,
Eastern Kentucky University, KY 40475, USA}

\email{jason.gibson@eku.edu}

\keywords{}

\subjclass[2010]{11B75 (Primary), 11J71 (Secondary).}
\keywords{Chinese Remainder Theorem, Diophantine approximation,
Lonely Runner Conjecture}


\date{\today}

\begin{abstract}
Given collections $\mathcal{A}$ and $\mathcal{B}$
of residue classes modulo $m$ and $n$, respectively,
we investigate conditions on $\mathcal{A}$ and $\mathcal{B}$
that ensure that, for at least some $(a,b)\in\mathcal{A}\times\mathcal{B}$,
the system 
\begin{align*}
x&\equiv a\bmod m\\
x&\equiv b\bmod n
\end{align*}
has an integer solution, and we quantify the number of
such admissible pairs $(a,b)$.  The special case
where $\mathcal{A}$ and $\mathcal{B}$ consist
of intervals of residue classes
has application to the Lonely Runner Conjecture.
\end{abstract}

\maketitle

\section{Introduction}

The classical Chinese Remainder Theorem
provides necessary and sufficient conditions
for a system of linear congruence equations
to possess a solution.

\begin{theorem}[Chinese Remainder Theorem]\label{CRT}
Let $m_1,\ldots,m_k$ be $k$ positive integers,
and let $a_1,\ldots,a_k$ be any $k$ integers.
Then the system of congruences
\begin{align*}
x&\equiv a_1\bmod m_1\\
x&\equiv a_2\bmod m_2\\
&\ \ \vdots\\
x&\equiv a_k\bmod m_k
\end{align*}
has a solution if and only if $a_i\equiv a_j\bmod\gcd(m_i,m_j)$
for all pairs of indices $i,j$ with $1\le i<j\le k$.
\end{theorem}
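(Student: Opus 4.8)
The plan is to establish necessity by a one-line divisibility argument and sufficiency by induction on $k$, with the two-congruence case serving as the workhorse.

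Necessity is immediate: if $x$ satisfies all the congruences, then for any indices $i,j$ the integer $\gcd(m_i,m_j)$ divides both $x-a_i$ and $x-a_j$, hence divides $(x-a_j)-(x-a_i)=a_i-a_j$, so $a_i\equiv a_j\bmod\gcd(m_i,m_j)$.

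For sufficiency I would first dispose of the case $k=2$. Seeking a solution of the form $x=a_1+m_1 t$, the second congruence becomes $m_1 t\equiv a_2-a_1\bmod m_2$; since the hypothesis gives $\gcd(m_1,m_2)\mid a_2-a_1$, the standard solvability criterion for linear congruences furnishes such a $t$, and the resulting solution set is a single residue class modulo $\operatorname{lcm}(m_1,m_2)$. For general $k$, assume the theorem for $k-1$ congruences. Given $k$ congruences satisfying pairwise compatibility, the inductive hypothesis collapses the first $k-1$ of them into one congruence $x\equiv c\bmod M$ with $M=\operatorname{lcm}(m_1,\dots,m_{k-1})$. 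To merge this with $x\equiv a_k\bmod m_k$ via the $k=2$ case, I must verify the single compatibility condition $c\equiv a_k\bmod\gcd(M,m_k)$. For each $i\le k-1$ we have $c\equiv a_i\bmod m_i$, hence $c\equiv a_i\bmod\gcd(m_i,m_k)$, and combining with $a_i\equiv a_k\bmod\gcd(m_i,m_k)$ gives $c\equiv a_k\bmod\gcd(m_i,m_k)$; therefore $\gcd(m_i,m_k)\mid c-a_k$ for every $i\le k-1$, so $\operatorname{lcm}_{1\le i\le k-1}\gcd(m_i,m_k)$ divides $c-a_k$.

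The main obstacle, and the only substantive input, is the distributive identity
\[
\operatorname{lcm}_{1\le i\le k-1}\gcd(m_i,m_k)=\gcd\!\bigl(\operatorname{lcm}(m_1,\dots,m_{k-1}),\,m_k\bigr),
\]
which I would check prime by prime: for each prime $p$ the exponent of $p$ on the left-hand side is $\max_{i}\min\bigl(v_p(m_i),v_p(m_k)\bigr)$ and on the right-hand side is $\min\bigl(\max_i v_p(m_i),\,v_p(m_k)\bigr)$, and these agree because $\min$ distributes over $\max$ on the nonnegative integers. Granting this identity, the computation above yields $c\equiv a_k\bmod\gcd(M,m_k)$, so the two-congruence case applies to the pair $x\equiv c\bmod M$ and $x\equiv a_k\bmod m_k$ and produces a solution of the full system, completing the induction.
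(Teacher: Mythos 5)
The paper does not give its own proof of this classical theorem; it simply cites Hua (Theorem 7.1) and Niven--Zuckerman--Montgomery (Chapter 2.3, exercises 19--23), so there is no in-paper argument to compare against. Your proof is correct and is the standard one: necessity by a divisibility argument, the two-congruence case by reduction to solvability of the linear congruence $m_1 t\equiv a_2-a_1\bmod m_2$, and the general case by induction, with the inductive step resting on the distributive identity $\operatorname{lcm}_{i}\gcd(m_i,m_k)=\gcd(\operatorname{lcm}_i m_i,\,m_k)$, which you correctly verify prime by prime. One small point worth making explicit: when you ``collapse'' the first $k-1$ congruences into $x\equiv c\bmod M$ with $M=\operatorname{lcm}(m_1,\dots,m_{k-1})$, you are using not merely that a solution $c$ exists (which is all the inductive hypothesis literally asserts) but that the full solution set is exactly the residue class $c\bmod M$; this is immediate---any two solutions differ by a common multiple of $m_1,\dots,m_{k-1}$, hence by a multiple of $M$, and conversely shifting by $M$ preserves each of the first $k-1$ congruences---but it deserves a sentence, since the merge with $x\equiv a_k\bmod m_k$ via the two-congruence case depends on it.
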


\begin{proof}
See Theorem 7.1 of Hua \cite{Hua}  and
exercises 19 -- 23 in Chapter 2.3 of Niven, Zuckerman,
and Montgomery \cite{NZM}.
\end{proof}

This theorem admits generalization in several
directions.  For a statement of a Chinese
Remainder Theorem in the language of commutative
rings and ideals, see, e.g., Hungerford \cite{hungerford}.
Kleinert
\cite{kleinert} considers a quite general formalism which
yields the usual statement as a special case.
In the sequel, we consider a density Chinese Remainder Theorem
framed in the classical context of systems of two linear
congruence equations.

Specifically, 
given collections $\mathcal{A}$ and $\mathcal{B}$
of residue classes modulo $m$ and $n$, respectively,
we investigate conditions on $\mathcal{A}$ and $\mathcal{B}$
that ensure that, for at least some $(a,b)\in\mathcal{A}\times\mathcal{B}$,
the system 
\begin{align}
\begin{split}
x&\equiv a\bmod m\label{two_eq}\\
x&\equiv b\bmod n
\end{split}
\end{align}
has an integer solution, and we quantify the number of
such admissible pairs $(a,b)$.
For instance, if the collections $\mathcal{A}$
and $\mathcal{B}$ satisfy
$|\mathcal{A}|=|\mathcal{B}|=1$, then the Chinese Remainder Theorem
(Theorem~\ref{CRT})
provides appropriate conditions,
namely,
that the greatest common divisor $\gcd(m,n)$ divides
the difference $a-b$.

For fixed $m$ and $n$,
if the collections $\mathcal{A}$ and $\mathcal{B}$
are not large enough (in a suitable sense),
then it might occur that,
because the $\gcd$
condition doesn't hold,
the system
\eqref{two_eq} admits no solution with $(a,b)\in
\mathcal{A}\times\mathcal{B}$.  Still,
one expects that requiring $\mathcal{A}$
and $\mathcal{B}$ to have large enough density
in comparison to the full set of residue
classes modulo $m$ and $n$, respectively,
will force the $\gcd$ condition to hold,
so that the classical Chinese Remainder Theorem
then yields a solution (or solutions)
to the system of equations.

We make this intuition precise in Theorems
\ref{arbitrary} and \ref{intervals},
establishing a density Chinese Remainder
Theorem for the case of arbitrary
collections of residue classes
and for collections of intervals
of residue classes, respectively.
Results of this form,
particularly generalizations of
Corollary \ref{densitycor}, have application
to the Lonely Runner Conjecture,
which we also briefly discuss.

\section{Intersections of sets of arithmetic progressions}

In this section, we state our main results and some corollaries.
The proofs of Theorems \ref{arbitrary} and \ref{intervals}
appear in the following two sections.

\begin{theorem}[Density CRT]\label{arbitrary}
Let $\mathcal{A}$ and $\mathcal{B}$ be collections of
residue classes modulo $m$ and $n$, respectively.

Let $g=\gcd(m,n)$,
and let $h$ denote the number of solutions to the linear system
\begin{align}
\begin{split}
x&\equiv a\bmod m\label{two_arb_thm}\\
x&\equiv b\bmod n
\end{split}
\end{align}
modulo $mn/g$ as $(a,b)$ ranges over $\mathcal{A}\times\mathcal{B}$.

Write
\begin{align}
\begin{split}
|\mathcal{A}|&=A\frac{m}{g}+r_A,\qquad 0\le r_A<\frac{m}{g},\label{arb_sizes}\\
|\mathcal{B}|&=B\frac{n}{g}+r_B,\qquad 0\le r_B<\frac{n}{g}.\\
\end{split}
\end{align}

Then $h$, the number of solutions to the linear system \eqref{two_arb_thm} modulo $mn/g$,
satisfies
\begin{equation}
h\ge \begin{cases}
0, &\mbox{if } A+B < g-1,\\
r_Ar_B,& \mbox{if }A+B = g-1,\\
(A+B-g)mn/g^2+r_An/g+r_Bm/g,&\mbox{if } A+B > g-1.
\end{cases}
\end{equation}
\end{theorem}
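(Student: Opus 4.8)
The plan is to convert the count $h$ into a purely combinatorial extremal problem about nonnegative integer vectors, and then to settle that problem with one algebraic identity together with a pigeonhole argument on supports.

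First I would reduce everything modulo $g$. By the Chinese Remainder Theorem (Theorem~\ref{CRT}), the system \eqref{two_arb_thm} has a solution if and only if $a\equiv b\bmod g$, in which case the solution is a single residue class modulo $mn/g=\operatorname{lcm}(m,n)$; moreover distinct solvable pairs $(a,b)$ yield distinct solutions modulo $mn/g$, as one sees by reducing a putative common solution modulo $m$ and modulo $n$. Hence, setting
\begin{equation*}
\alpha_j=\#\{a\in\mathcal{A}:a\equiv j\bmod g\},\qquad \beta_j=\#\{b\in\mathcal{B}:b\equiv j\bmod g\}\qquad(j=0,1,\dots,g-1),
\end{equation*}
one gets $h=\sum_{j=0}^{g-1}\alpha_j\beta_j$, where the vectors $(\alpha_j)$, $(\beta_j)$ satisfy $0\le\alpha_j\le m/g$, $0\le\beta_j\le n/g$, $\sum_j\alpha_j=|\mathcal{A}|$, and $\sum_j\beta_j=|\mathcal{B}|$. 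It therefore suffices to bound $\sum_j\alpha_j\beta_j$ from below over all such vectors.

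The second step is an algebraic identity. Writing $\gamma_j=m/g-\alpha_j\ge0$ and $\eta_j=n/g-\beta_j\ge0$, so that $\sum_j\gamma_j=m-|\mathcal{A}|=(g-A)(m/g)-r_A$ and $\sum_j\eta_j=n-|\mathcal{B}|=(g-B)(n/g)-r_B$, I would expand $\alpha_j\beta_j=(m/g-\gamma_j)(n/g-\eta_j)$, sum over the $g$ indices, and substitute these two sums to obtain
\begin{equation*}
h=\sum_{j=0}^{g-1}\alpha_j\beta_j=(A+B-g)\frac{mn}{g^2}+r_A\frac{n}{g}+r_B\frac{m}{g}+\sum_{j=0}^{g-1}\gamma_j\eta_j .
\end{equation*}
Since $\sum_j\gamma_j\eta_j\ge0$, this yields the stated bound whenever $A+B>g-1$ (equivalently $A+B\ge g$), and the case $A+B<g-1$ is trivial because $h\ge0$. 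What remains is the borderline case $A+B=g-1$, where the identity reduces matters to the inequality $\sum_j\gamma_j\eta_j\ge(m/g-r_A)(n/g-r_B)$. Here I would use that $\sum_j\gamma_j\eta_j$, being bilinear on the product of two bounded polyhedra, attains its minimum over the feasible set at a point where each of $(\gamma_j)$ and $(\eta_j)$ is a vertex of its polytope. At such a point $(\gamma_j)$ consists (up to a permutation of coordinates) of $B$ entries equal to $m/g$, one entry equal to $m/g-r_A$, and zeros; since $r_A<m/g$ forces $m/g-r_A>0$, its support has size exactly $B+1$ and each nonzero entry is $\ge m/g-r_A$. Symmetrically $(\eta_j)$ has support of size $A+1$ with each nonzero entry $\ge n/g-r_B$. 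Because $(A+1)+(B+1)=g+1>g$, the two supports must meet at some index $j_0$, whence $\gamma_{j_0}\eta_{j_0}\ge(m/g-r_A)(n/g-r_B)$ and a fortiori so is the full sum; combining the three cases proves the theorem.

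The step I expect to be the main obstacle is the borderline case: the inequality $\sum_j\gamma_j\eta_j\ge(m/g-r_A)(n/g-r_B)$ does not follow from the support pigeonhole by itself, and the essential point is the vertex reduction, which forces every nonzero deficiency to be at least the ``fractional-part'' deficiency and thereby upgrades a mere overlap of supports into a term of the correct size. I also expect that tracking equality through these steps shows all three bounds are sharp — realized by spreading $\mathcal{A}$ and $\mathcal{B}$ over the residue classes modulo $g$ so that their footprints overlap as little as the cardinality constraints permit — which, although not needed for the theorem, explains why the three regimes appear.
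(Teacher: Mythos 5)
Your proof is correct, and it takes a genuinely different route from the paper's. After the common reduction to $h=\sum_{j=0}^{g-1}\alpha_j\beta_j$ with the box and sum constraints, the paper proceeds via the rearrangement inequality (Lemma~\ref{r_ineq}) to sort the two count vectors oppositely, then exhibits the extremal pair of distributions under the constraints (Lemma~\ref{extremal_dist}), and finally does a direct three-case evaluation of the extremal sum (Lemma~\ref{sum_lower_bound}); all three regimes fall out of a single extremal computation. You instead pass to the complementary variables $\gamma_j=m/g-\alpha_j$, $\eta_j=n/g-\beta_j$, and expand to get the exact identity
\begin{equation*}
h=(A+B-g)\frac{mn}{g^2}+r_A\frac{n}{g}+r_B\frac{m}{g}+\sum_{j=0}^{g-1}\gamma_j\eta_j,
\end{equation*}
which together with $\sum_j\gamma_j\eta_j\ge0$ instantly dispatches the cases $A+B\ge g$ and $A+B<g-1$ with no optimization at all. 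Only the borderline $A+B=g-1$ requires work, and there your reduction to vertices of the two box-simplex polytopes (legitimate since the objective is bilinear and each polytope is compact) combined with the support-size pigeonhole $(A+1)+(B+1)=g+1>g$ cleanly yields the needed $\sum_j\gamma_j\eta_j\ge(m/g-r_A)(n/g-r_B)$, which you correctly observe is algebraically equivalent to $h\ge r_Ar_B$ under the identity. Compared to the paper, your identity makes the ``generic'' two regimes transparent and isolates exactly where the real combinatorial content lives; the paper's rearrangement-plus-extremal-distribution machinery is heavier but treats the three regimes in one uniform computation. One small cosmetic remark: in the vertex description, the claim ``support has size exactly $B+1$'' holds also when $r_A=0$ (the partial coordinate then equals $m/g$ and merges with the others), so your phrase ``since $r_A<m/g$ forces $m/g-r_A>0$'' is the right justification and covers that degenerate subcase as well; you might make that explicit, but it is not a gap.
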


\begin{theorem}[Density CRT for intervals]\label{intervals}
Let $\mathcal{A}$ and $\mathcal{B}$ be intervals of
residue classes modulo $m$ and $n$, respectively.
Let $g=\gcd(m,n)$,
and let $h$ denote the number of solutions to the linear system
\begin{align}
\begin{split}
x&\equiv a\bmod m\label{two_int_thm}\\
x&\equiv b\bmod n
\end{split}
\end{align}
modulo $mn/g$ as $(a,b)$ ranges over $\mathcal{A}\times\mathcal{B}$.

Write
\begin{align}
\begin{split}
|\mathcal{A}|&=Ag+r_A,\qquad 0\le r_A<g,\label{sizes}\\
|\mathcal{B}|&=Bg+r_B,\qquad 0\le r_B<g.\\
\end{split}
\end{align}
Then
$h$, the number of solutions to the linear system \eqref{two_int_thm} modulo $mn/g$,
satisfies
\begin{equation}\label{interval_solution_count}
h\ge ABg
+Ar_B
+Br_A
+\max(0,r_A+r_B-g).
\end{equation}
\end{theorem}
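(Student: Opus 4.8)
\emph{Proof strategy.} The plan is to reduce the counting of solutions modulo $mn/g$ to a counting problem modulo $g$, and then to exploit the fact that an interval of consecutive residues distributes as evenly as possible under reduction mod $g$. First I would invoke the classical Chinese Remainder Theorem (Theorem~\ref{CRT}): for a fixed pair $(a,b)$ the system \eqref{two_int_thm} is solvable if and only if $a\equiv b\bmod g$, and when it is solvable the solution is unique modulo $\mathrm{lcm}(m,n)=mn/g$. Moreover, if two solvable pairs $(a,b)$ and $(a',b')$ yielded the same residue $x\bmod mn/g$, then, since $m\mid mn/g$ and $n\mid mn/g$, reducing mod $m$ and mod $n$ would force $a=a'$ and $b=b'$ as residue classes. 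Hence distinct solvable pairs produce distinct solutions, and
\[
h=\#\{(a,b)\in\mathcal{A}\times\mathcal{B}:a\equiv b\bmod g\}=\sum_{j=0}^{g-1}\alpha_j\beta_j,
\]
where $\alpha_j=\#\{a\in\mathcal{A}:a\equiv j\bmod g\}$ and $\beta_j=\#\{b\in\mathcal{B}:b\equiv j\bmod g\}$.

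The second step is a distribution lemma. Since $\mathcal{A}$ is an interval of $|\mathcal{A}|$ consecutive residue classes modulo $m$ and $g\mid m$, their reductions mod $g$ run through $\mathbb{Z}/g\mathbb{Z}$ as evenly as possible, so each $\alpha_j$ equals either $\lfloor |\mathcal{A}|/g\rfloor=A$ or $\lceil |\mathcal{A}|/g\rceil$, and the number of indices $j$ with $\alpha_j=A+1$ is exactly $|\mathcal{A}|-Ag=r_A$ (this count being $0$ when $r_A=0$). The same applies to $\mathcal{B}$, giving $\beta_j\in\{B,B+1\}$ with exactly $r_B$ indices at $B+1$. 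Writing $S=\{j:\alpha_j=A+1\}$ and $T=\{j:\beta_j=B+1\}$, so that $\alpha_j=A+\mathbf{1}_{S}(j)$, $\beta_j=B+\mathbf{1}_{T}(j)$, $|S|=r_A$ and $|T|=r_B$, I would expand
\[
h=\sum_{j=0}^{g-1}\bigl(A+\mathbf{1}_{S}(j)\bigr)\bigl(B+\mathbf{1}_{T}(j)\bigr)=ABg+Ar_B+Br_A+|S\cap T|.
\]

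For the last step it remains to bound $|S\cap T|$ from below. Since $S$ and $T$ are subsets of the $g$-element set $\mathbb{Z}/g\mathbb{Z}$, we have $|S\cap T|=|S|+|T|-|S\cup T|\ge r_A+r_B-g$, and trivially $|S\cap T|\ge 0$; hence $|S\cap T|\ge\max(0,r_A+r_B-g)$, which gives \eqref{interval_solution_count}. (Using that $S$ and $T$ are in fact arcs of $\mathbb{Z}/g\mathbb{Z}$ one can check this bound is attained, but only the inequality is needed here.)

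The step I expect to require the most care is the distribution lemma of the second paragraph: one must verify rigorously that $|\mathcal{A}|$ consecutive residue classes mod $m$, reduced mod $g$, hit each class of $\mathbb{Z}/g\mathbb{Z}$ either $\lfloor |\mathcal{A}|/g\rfloor$ or $\lceil |\mathcal{A}|/g\rceil$ times, handling the possible wrap-around of the interval modulo $m$ as well as the degenerate cases $r_A=0$, $A=0$, and $|\mathcal{A}|=m$. Once that is established, the reduction via the Chinese Remainder Theorem and the counting identity are routine bookkeeping, and the concluding inequality on $|S\cap T|$ is immediate.
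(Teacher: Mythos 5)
Your proposal is correct and follows essentially the same route as the paper: reduce via the Chinese Remainder Theorem to counting pairs with $a\equiv b\bmod g$, observe that an interval of residues mod $m$ (with $g\mid m$) distributes as evenly as possible among the $g$ classes mod $g$, and then split the count into a main term $ABg$, the cross terms $Ar_B+Br_A$, and a leftover bounded below by $\max(0,r_A+r_B-g)$. The paper phrases the decomposition in terms of partitioning $\mathcal{A}$ and $\mathcal{B}$ into full sub-intervals of length $g$ plus short tails, while you encode the same information algebraically as $\alpha_j=A+\mathbf{1}_S(j)$, $\beta_j=B+\mathbf{1}_T(j)$ with $|S|=r_A$, $|T|=r_B$ and expand the product; these are the same argument. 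One small point in your favor: the paper's proof has a typo ($\min$ where $\max$ is meant in the final contribution), which your version states correctly.
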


\begin{corollary}[$\gcd(m,n)=1$]

Given the conditions of Theorem \ref{intervals}
with the additional assumption that $\gcd(m,n)=1$,
then the linear system \eqref{two_int_thm}
possesses exactly $|\mathcal{A}||\mathcal{B}|$
solutions.
\end{corollary}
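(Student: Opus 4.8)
The plan is to sandwich $h$ between the lower bound furnished by Theorem~\ref{intervals} and the elementary upper bound $|\mathcal{A}||\mathcal{B}|$, observing that the coprimality hypothesis collapses both ends to the same value. First I would specialize Theorem~\ref{intervals} to $g=\gcd(m,n)=1$. The division-with-remainder expressions in \eqref{sizes} then read $|\mathcal{A}|=A\cdot 1+r_A$ with $0\le r_A<1$ and $|\mathcal{B}|=B\cdot 1+r_B$ with $0\le r_B<1$, which forces $r_A=r_B=0$ and hence $A=|\mathcal{A}|$, $B=|\mathcal{B}|$. Substituting into \eqref{interval_solution_count} gives $h\ge ABg+Ar_B+Br_A+\max(0,r_A+r_B-g)=|\mathcal{A}||\mathcal{B}|+0+0+\max(0,-1)=|\mathcal{A}||\mathcal{B}|$.

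Next I would establish the reverse inequality $h\le |\mathcal{A}||\mathcal{B}|$. Since $g=1$, the relevant modulus is $mn/g=mn$, and by the classical Chinese Remainder Theorem (Theorem~\ref{CRT}) the natural reduction map $\mathbb{Z}/mn\mathbb{Z}\to(\mathbb{Z}/m\mathbb{Z})\times(\mathbb{Z}/n\mathbb{Z})$ is a bijection. Consequently, for each pair $(a,b)\in\mathcal{A}\times\mathcal{B}$ the system \eqref{two_int_thm} has exactly one solution modulo $mn$, and distinct pairs $(a,b)$ produce distinct solution classes modulo $mn$ (otherwise two distinct pairs would have the same image under the bijection). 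Hence the number $h$ of solution classes is at most the number of pairs, namely $|\mathcal{A}\times\mathcal{B}|=|\mathcal{A}||\mathcal{B}|$.

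Combining the two inequalities yields $h=|\mathcal{A}||\mathcal{B}|$, as claimed. The only bookkeeping point is that $\mathcal{A}$ and $\mathcal{B}$, being intervals of residue classes, are genuine sets with no repeated classes, so the count of admissible pairs is exactly $|\mathcal{A}||\mathcal{B}|$ and the injectivity argument in the previous paragraph is legitimate. I do not expect any real obstacle here: the coprimality assumption removes precisely the interaction (collisions among solution classes coming from the $\gcd$) that the general statements of Theorems~\ref{arbitrary} and~\ref{intervals} must contend with, so the corollary is essentially a sanity check that the interval bound is sharp in the classical CRT regime.
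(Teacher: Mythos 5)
Your proof is correct and follows the same route the paper sketches: specialize the lower bound of Theorem~\ref{intervals} at $g=1$ (forcing $r_A=r_B=0$), and pair it with the bijectivity of the classical CRT map to get the matching upper bound. You are somewhat more careful than the paper's terse two-line proof in making the upper-bound direction explicit, but the underlying argument is the same.
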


\begin{proof}
This follows immediately from Theorem \ref{intervals}.

Also, more directly, 
note that the condition $\gcd(m,n)=1$
ensures that each choice $(a,b)\in\mathcal{A}\times
\mathcal{B}$ of residue classes leads to a solution modulo $mn$
of the linear system
by way of the classical Chinese Remainder Theorem
(Proposition \ref{CRT}).
\end{proof}

\begin{corollary}[Density statement]\label{densitycor}
Given the conditions of Theorem \ref{intervals},
with the additional assumptions that
\begin{align}
\begin{split}
|\mathcal{A}|&>\frac{1}{3} m, \label{alpha}\\
|\mathcal{B}|&> \frac{1}{3} n,
\end{split}
\end{align}
and that $m$ and $n$ are distinct,
then the linear system \eqref{two_int_thm}
possesses a solution.

Also, the constant $\frac{1}{3}$ can not be taken to be smaller
while keeping the above conclusion of this corollary.
\end{corollary}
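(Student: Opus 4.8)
The plan is to obtain the first assertion directly from the lower bound \eqref{interval_solution_count} of Theorem~\ref{intervals} by checking that, under the stated hypotheses, this bound is always at least $1$; and then to prove optimality of the constant $\tfrac13$ by an explicit one-parameter family of interval pairs.

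First I would set up the notation of \eqref{sizes}: write $g=\gcd(m,n)$, $|\mathcal{A}|=Ag+r_A$ with $0\le r_A<g$, and $|\mathcal{B}|=Bg+r_B$ with $0\le r_B<g$. Since $m,n\ge 1$, the assumptions \eqref{alpha} force $|\mathcal{A}|\ge 1$ and $|\mathcal{B}|\ge 1$. Then I would split into cases. If $A\ge 1$ and $B\ge 1$, the first term $ABg$ of \eqref{interval_solution_count} is already $\ge g\ge 1$. If exactly one of $A,B$ vanishes, say $A=0$ and $B\ge 1$, then $r_A=|\mathcal{A}|\ge 1$ and the term $Br_A$ is $\ge r_A\ge 1$. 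The only remaining possibility is $A=B=0$, in which case $|\mathcal{A}|<g$ and $|\mathcal{B}|<g$; combined with \eqref{alpha} this gives $m<3g$ and $n<3g$. As $m$ and $n$ are positive multiples of $g$, each of them equals $g$ or $2g$, and since $m\ne n$ we must have $\{m,n\}=\{g,2g\}$, so $m+n=3g$. Hence $r_A+r_B=|\mathcal{A}|+|\mathcal{B}|>(m+n)/3=g$, so that $\max(0,r_A+r_B-g)\ge 1$ in \eqref{interval_solution_count}. In every case the bound gives $h\ge 1$, so \eqref{two_int_thm} has a solution. (Note that $m\ne n$ enters only in the last case.)

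For the sharpness claim I want to show that for each real $c<\tfrac13$ there are distinct moduli $m,n$ and intervals $\mathcal{A},\mathcal{B}$ with $|\mathcal{A}|>cm$ and $|\mathcal{B}|>cn$ for which \eqref{two_int_thm} has no solution. The key point is that, since the system in \eqref{two_int_thm} is solvable precisely when $a\equiv b\bmod g$, the quantity $h$ equals the number of pairs $(a,b)\in\mathcal{A}\times\mathcal{B}$ whose images in $\mathbb{Z}/g\mathbb{Z}$ coincide; so it suffices to arrange that the reductions of $\mathcal{A}$ and $\mathcal{B}$ modulo $g$ are disjoint. I would take $t$ a large positive integer, $m=3t$, $n=6t$ (so $g=\gcd(m,n)=3t$ and $m\ne n$), $\mathcal{A}=\{0,1,\dots,t\}\bmod m$, and $\mathcal{B}=\{t+1,t+2,\dots,3t-1\}\bmod n$. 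Every representative listed lies in $\{0,\dots,3t-1\}$, so the reductions modulo $g=3t$ are exactly the disjoint intervals $\{0,\dots,t\}$ and $\{t+1,\dots,3t-1\}$, giving $h=0$. Finally $|\mathcal{A}|/m=(t+1)/(3t)>\tfrac13>c$ for every $t$, while $|\mathcal{B}|/n=(2t-1)/(6t)\to\tfrac13$ from below and exceeds $c$ as soon as $t>1/(2-6c)$; choosing such a $t$ yields the desired counterexample.

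The case analysis and the size estimates are entirely routine; the only spot that deserves a sentence of care is the step in the case $A=B=0$ where the divisibilities $g\mid m$, $g\mid n$ together with $m,n<3g$ are used to conclude $\{m,n\}=\{g,2g\}$ — but this is immediate once one notes that a positive multiple of $g$ strictly below $3g$ must be $g$ or $2g$, after which $m\ne n$ removes the diagonal. I do not anticipate a genuine obstacle.
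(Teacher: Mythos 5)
Your proof is correct and follows essentially the same route as the paper: invoke the lower bound \eqref{interval_solution_count} from Theorem~\ref{intervals}, split on whether $A$, $B$ vanish, and exhibit a counterexample with $g=\gcd(m,n)$ large to pin down the constant $\tfrac13$. Your treatment of the final case $A=B=0$ is a bit tidier than the paper's: you observe at once that $m,n<3g$ and $g\mid m$, $g\mid n$ force $\{m,n\}=\{g,2g\}$, hence $m+n=3g$ and $r_A+r_B>(m+n)/3=g$ in a single stroke, whereas the paper separately runs through the subcases $m/g=1$ and $m/g=2$. For the sharpness claim the paper uses a single tight example with $|\mathcal{A}|=m/3$ and $|\mathcal{B}|=n/3$ (which already defeats any constant $c<\tfrac13$, strict inequality included), while you use a one-parameter family approaching the threshold; both are valid, the paper's being slightly more economical.
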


\begin{proof}
Following Theorem \ref{intervals},
write
\begin{align}
\begin{split}
|\mathcal{A}|&=Ag+r_A,\qquad 0\le r_A<g,\\
|\mathcal{B}|&=Bg+r_B,\qquad 0\le r_B<g,\\
\end{split}
\end{align}
with $g=\gcd(m,n)$.
To establish the desired claim of this corollary, we must examine
the terms appearing on the right side of \eqref{interval_solution_count}.
If $AB>0$, then the corollary follows immediately, and so it
remains to consider the possibility that $AB=0$.

To that end, suppose that $A=0$ and $B=0$.
(If $B>0$, then the $Br_A$ term in \eqref{interval_solution_count}
is nonzero, and again we are done.)
Then $g>r_A=|\mathcal{A}|>\frac{1}{3}m$
implies that $\frac{m}{g}<3$.  This forces $\frac{m}{g}$ to be either $1$ or $2$.

In the first case, we have $m=g$.  Since $m$ and $n$ are distinct
and $g$ must divide $n$, it must be that $n=mn_0$, where the integer
$n_0$ satisfies $n_0\ge 2$.  Then we have
\begin{equation}\label{B_size}
|\mathcal{B}|>\frac{1}{3}n = \frac{1}{3}mn_0 \ge \frac{2}{3}m=\frac{2}{3}g.
\end{equation}
With $r_A>\frac{1}{3}m=\frac{1}{3}g$ and
$r_B=|\mathcal{B}|>\frac{2}{3}g$,
this establishes that the $r_A+r_B-g$ term of \eqref{interval_solution_count} is nonzero,
and then the linear system posseses a solution by Theorem \ref{intervals}.

In the second case, that $\frac{m}{g}=2$, we have $m=2g$.
Since $m$ and $n$ are distinct and $g$ must divide $n$,
it must be that $n=gn_0$, where the integer $n_0$ satisfies $n_0=1$
or $n_0\ge 3$.  If $n_0=1$, then $m=2g$ and $n=g$, and a solution
exists by the reasoning above.  If $n_0\ge 3$, then
\begin{equation}
|\mathcal{B}|>\frac{1}{3}n=\frac{1}{3}gn_0\ge g
\end{equation}
implies that $B>0$, and, consequently, the $Br_A$ term
in \eqref{interval_solution_count} is nonzero.
Thus, in the case also, a solution again exists by Theorem \ref{intervals}.

Finally,
to see that the constant $\frac{1}{3}$ is optimal, let
$M$ be a large integer, and set $m=3M$, $n=2\cdot 3M$.
Let 
\begin{equation}
\mathcal{A}=\{0,1,\ldots,M-1\}
\end{equation}
and
\begin{equation}
\mathcal{B}=\{M,M+1,\ldots,3M-1\}.
\end{equation}
Then $\gcd(m,n)=\gcd(3M,2\cdot 3M)=3M$,
with $|\mathcal{A}|=M=\frac{1}{3}m$
and $|\mathcal{B}|=2M=\frac{1}{3}n$.
If $(a,b)\in\mathcal{A}\times\mathcal{B}$,
then $a-b$ is nonzero modulo $\gcd(m,n)=3M$,
so that the classical Chinese Remainder Theorem
(Proposition \ref{CRT}) implies that
the linear system has no solution.

\end{proof}

\section{Arbitrary collections}

In this section, we prove Theorem~\ref{arbitrary}.
Given collections $\mathcal{A}$ and $\mathcal{B}$ of
residue classes modulo $m$ and $n$, respectively,
the classical Chinese Remainder Theorem
reduces counting the number of solutions of the linear
system \eqref{two_arb_thm}
to counting the pairs $(a,b)\in\mathcal{A}\times\mathcal{B}$
with $a\equiv b\bmod \gcd(m,n)$.

To that end,
let $g=\gcd(m,n)$, and, for a collection of
residue classes $\mathcal{C}$ and integer $i$ with $1\le i\le g$,
partition the collection $\mathcal{C}$ into sets
\begin{equation}\label{class_definition}
\mathcal{C}_i=\{c\in\mathcal{C}:c\equiv i\bmod g\},
\end{equation}
and define the counting function $f$ by
\begin{equation}\label{f}
f(i,\mathcal{C})=|\mathcal{C}_i|.
\end{equation}
The Chinese
Remainder Theorem then yields that the number of solutions
to the linear system \eqref{two_arb_thm}
is given by the sum
\begin{equation}\label{f_product}
\sum_{i=1}^g f(i,\mathcal{A})f(i,\mathcal{B}).
\end{equation}

Note that we have
\begin{equation}
\begin{split}\label{class_enumeration}
|\mathcal{A}|&=\sum_{i=1}^gf(i,\mathcal{A}),\\
|\mathcal{B}|&=\sum_{i=1}^gf(i,\mathcal{B}).
\end{split}
\end{equation}
Moreover, for $1\le i\le g$, the right-hand summands in
\eqref{class_enumeration} are each bounded above, with
\begin{align}
\begin{split}\label{f_bound}
f(i,\mathcal{A})&\le \frac{m}{g},\\
f(i,\mathcal{B})&\le \frac{n}{g}.
\end{split}
\end{align}

To establish Theorem \ref{arbitrary}, we must find a lower
bound on \eqref{f_product}
under the conditions \eqref{class_enumeration}
and \eqref{f_bound}.
We do so in three steps.  First, we use
an inequality on rearrangements, Lemma \ref{r_ineq},
to reduce to the case where the values $f(i,\mathcal{A})$
are increasing and the values $f(i,\mathcal{B})$ are decreasing.
Next, in this case, Lemma \ref{extremal_dist} yields
an extremal distribution for these values under the constraints of
\eqref{class_enumeration} and \eqref{f_bound}.
Finally, Lemma \ref{sum_lower_bound} provides 
a case analysis for the explicit evaluation
of the sum \eqref{f_product}
in this extremal case.  
Theorem \ref{arbitrary}
follows directly from these three lemmas.

\begin{lemma}[Rearrangement inequality]\label{r_ineq}
For each pair of ordered real sequences
$a_1\le a_2\le \cdots\le a_n$
and
$b_1\le b_2\le\cdots\le b_n$,
and for each permutation $\sigma:[n]\to[n]$,
we have
\begin{equation}
\sum_{k=1}^na_kb_{n-k+1}\le\sum_{k=1}^n
a_kb_{\sigma(k)}\le\sum_{k=1}^n a_kb_k.
\end{equation}
\end{lemma}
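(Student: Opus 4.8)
The plan is to establish the right-hand (upper-bound) inequality first, by an exchange argument on adjacent transpositions, and then to obtain the left-hand (lower-bound) inequality from it by a sign change.

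For the upper bound I would induct on the number of inversions of $\sigma$, that is, on the number of pairs $i<j$ with $\sigma(i)>\sigma(j)$. If this number is zero then $\sigma$ is non-decreasing and hence the identity, so $\sum_k a_kb_{\sigma(k)}=\sum_k a_kb_k$ and there is nothing to prove. Otherwise $\sigma$ fails to be non-decreasing, so there is an index $i$ with $\sigma(i)>\sigma(i+1)$; let $\sigma'$ be obtained from $\sigma$ by interchanging its values at positions $i$ and $i+1$. Interchanging adjacent positions at such a descent decreases the number of inversions by exactly one, and
\begin{equation}
\sum_{k=1}^{n}a_kb_{\sigma'(k)}-\sum_{k=1}^{n}a_kb_{\sigma(k)}
=\bigl(a_{i+1}-a_i\bigr)\bigl(b_{\sigma(i)}-b_{\sigma(i+1)}\bigr)\ge 0,
\end{equation}
because $a_{i+1}\ge a_i$ and $b_{\sigma(i)}\ge b_{\sigma(i+1)}$. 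The inductive hypothesis applied to $\sigma'$ then gives $\sum_k a_kb_{\sigma(k)}\le\sum_k a_kb_{\sigma'(k)}\le\sum_k a_kb_k$, which closes the induction.

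For the lower bound I would apply the upper bound just proved to the same sequence $(a_k)$ together with the sequence $b_k':=-b_{n+1-k}$, which is non-decreasing. Writing $\rho(k)=n+1-\sigma(k)$, which is again a permutation of $[n]$, one has $b'_{\sigma(k)}=-b_{\rho(k)}$, and therefore
\begin{equation}
-\sum_{k=1}^{n}a_kb_{\rho(k)}=\sum_{k=1}^{n}a_kb'_{\sigma(k)}\le\sum_{k=1}^{n}a_kb'_k=-\sum_{k=1}^{n}a_kb_{n+1-k},
\end{equation}
i.e.\ $\sum_k a_kb_{n+1-k}\le\sum_k a_kb_{\rho(k)}$. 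Since $\sigma\mapsto(k\mapsto n+1-\sigma(k))$ is a bijection of the symmetric group onto itself, $\rho$ ranges over all permutations as $\sigma$ does, so this is precisely the desired lower bound.

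This is the classical rearrangement inequality, so I do not expect a genuine obstacle; the only points that need care are checking that an adjacent interchange at a descent really does drop the inversion count by exactly one — which is what makes the induction well-founded — and verifying that $k\mapsto n+1-\sigma(k)$ is a bijection, so that the sign-change step in the last paragraph covers every permutation.
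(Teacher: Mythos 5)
Your proof is correct. The paper does not actually prove this lemma; it simply cites Chapter 5 of Steele and Chapter X of Hardy, Littlewood, and P\'olya, treating the rearrangement inequality as standard. Your self-contained argument --- induction on the inversion count of $\sigma$, with an adjacent-transposition exchange step giving $\bigl(a_{i+1}-a_i\bigr)\bigl(b_{\sigma(i)}-b_{\sigma(i+1)}\bigr)\ge 0$ at each descent, followed by a negate-and-reverse substitution $b'_k=-b_{n+1-k}$ to deduce the lower bound from the upper one --- is precisely the classical proof found in those references. The two points you flag as needing care (that swapping at a descent drops the inversion count by exactly one, making the induction terminate, and that $k\mapsto n+1-\sigma(k)$ ranges over all of $S_n$) both check out, so the argument is complete.
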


\begin{proof}
See Chapter 5 of Steele \cite{masterclass}
and Chapter X of Hardy, Littlewood, and P\'olya
\cite{HLP}.
\end{proof}

\begin{lemma}[Extremal distribution]\label{extremal_dist}
For each pair of non-negative ordered real sequences
$a_1\le a_2\le \cdots\le a_n$
and
$b_1\le b_2\le\cdots\le b_n$
satisfying
$A=\sum_{k=1}^n a_k$,
$B=\sum_{k=1}^n b_k$,
and, for all $k$ with $1\le k\le n$, the bounds $a_k\le q_A$ and
$b_k\le q_B$,
then we have 
\begin{equation}
\sum_{k=1}^n a_k^* b_{n-k+1}^* \le \sum_{k=1}^na_kb_{n-k+1},
\end{equation}
where
\begin{equation}\label{a}
a_k^*=
\begin{cases}
0,&\mbox{if } k< n-\lfloor A/q_A\rfloor,\\
A-q_A\lfloor \frac{A}{q_A}\rfloor, &\mbox{if } k= n-\lfloor A/q_A\rfloor,\\
q_A,&\mbox{if } k> n-\lfloor A/q_A\rfloor,\\
\end{cases}
\end{equation}
and
\begin{equation}\label{b}
b_k^*=
\begin{cases}
0, &\mbox{if } k< n-\lfloor B/q_B\rfloor,\\
B-q_B\lfloor \frac{B}{q_B}\rfloor, &\mbox{if }k= n-\lfloor B/q_B\rfloor,\\
q_B,&\mbox{if } k> n-\lfloor B/q_B\rfloor.\\
\end{cases}
\end{equation} 
\end{lemma}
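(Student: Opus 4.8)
The plan is to avoid a direct exchange argument on the bilinear sum $\sum_k a_k b_{n-k+1}$ (which runs into ties) by reformulating the problem and then \emph{linearizing it via summation by parts}. First I would replace $b$ by its reversal: set $c_k=b_{n-k+1}$ and $c_k^*=b_{n-k+1}^*$ for $1\le k\le n$, so that $c$ is a non-negative \emph{non-increasing} sequence with $\sum_k c_k=B$ and each $c_k\le q_B$. Unwinding \eqref{a} and \eqref{b}, the two extremal sequences are
\[
a^*=(0,\dots,0,r_A,q_A,\dots,q_A),\qquad c^*=(q_B,\dots,q_B,r_B,0,\dots,0),
\]
with $\lfloor A/q_A\rfloor$ trailing copies of $q_A$ and $\lfloor B/q_B\rfloor$ leading copies of $q_B$, where $r_A,r_B$ are the remainders appearing in \eqref{a} and \eqref{b}. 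The asserted inequality is then exactly $\sum_k a_k^* c_k^*\le\sum_k a_kc_k$.

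The heart of the matter is a tail-sum domination. For right-tail sums $R_j(x)=\sum_{k=j}^{n}x_k$, every admissible $a$ satisfies $R_j(a)\le\min((n-j+1)q_A,\,A)$, because each summand is at most $q_A$ and a sum of non-negative terms cannot exceed the total; equality holds for $a=a^*$, and $R_1(a)=A=R_1(a^*)$ in all cases. Dually, for left partial sums $L_j(x)=\sum_{k=1}^{j}x_k$, every admissible non-increasing $c$ satisfies $L_j(c)\le\min(jq_B,\,B)=L_j(c^*)$, with $L_n(c)=B=L_n(c^*)$. These are the only points where the bounds $q_A,q_B$ and the prescribed sums $A,B$ get used.

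Next come two summation-by-parts computations. Writing $a_k=R_k(a)-R_{k+1}(a)$ gives $\sum_k a_kc_k=Ac_1+\sum_{k=2}^{n}R_k(a)(c_k-c_{k-1})$; since $c$ is non-increasing the weights $c_k-c_{k-1}$ are nonpositive, so enlarging each $R_k(a)$ to $R_k(a^*)$ only lowers the right-hand side, whence $\sum_k a_kc_k\ge\sum_k a_k^* c_k$ --- and this holds for every non-negative non-increasing $c$. Symmetrically, writing $c_k=L_k(c)-L_{k-1}(c)$ gives $\sum_k a_kc_k=Ba_n+\sum_{k=1}^{n-1}L_k(c)(a_k-a_{k+1})$, and since $a$ is non-decreasing the same reasoning yields $\sum_k a_kc_k\ge\sum_k a_kc_k^*$ for every non-negative non-decreasing $a$. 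Chaining these --- the first applied to $(a,c)$, the second applied with the non-decreasing sequence $a^*$ in place of $a$ --- gives $\sum_k a_kc_k\ge\sum_k a_k^* c_k\ge\sum_k a_k^* c_k^*$, which is the claim after translating back through $c_k=b_{n-k+1}$.

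I do not expect a real obstacle once this route is chosen: the genuine subtlety is recognizing that a naive one-swap argument on $\sum_k a_kc_k$ produces only a weak inequality when $a$ or $c$ has plateaus and need not obviously terminate, whereas passing to partial sums makes each of the two one-variable minimizations linear, forcing the comparison. The loose ends are then purely routine --- verifying the two Abel identities and handling the boundary conventions in \eqref{a} and \eqref{b} when $\lfloor A/q_A\rfloor$ or $\lfloor B/q_B\rfloor$ is $0$ or $n$ (so that $a^*$ or $c^*$ degenerates to a constant sequence), together with the vacuous case in which no admissible sequence exists.
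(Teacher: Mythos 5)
Your argument is correct, and it follows the same overall skeleton as the paper's proof: freeze one sequence and swap the other for its extremal version, then repeat in the other variable, yielding the chain $\sum_k a_k b_{n-k+1} \ge \sum_k a_k^* b_{n-k+1} \ge \sum_k a_k^* b_{n-k+1}^*$. The difference is in substance rather than architecture. The paper's proof merely \emph{asserts} each of those two inequalities, offering only ``since $b$ is nondecreasing'' and ``since $a^*$ is nondecreasing'' as justification, which is not by itself a proof --- one still has to show that $a^*$ is the minimizer of $a\mapsto\sum_k a_k b_{n-k+1}$ subject to the constraints $0\le a_k\le q_A$, $a$ nondecreasing, $\sum_k a_k=A$. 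You actually prove this: passing to partial sums via Abel summation turns the objective into $A c_1 + \sum_{k\ge 2} R_k(a)(c_k-c_{k-1})$ with nonpositive weights, and then the elementary tail-sum bound $R_j(a)\le\min((n{-}j{+}1)q_A,\,A)=R_j(a^*)$ forces the conclusion, with the symmetric computation handling the $b$-step. That turns each one-variable minimization into a genuinely linear comparison and sidesteps the pitfall (which you correctly identify) that a naive pairwise-exchange argument stalls on plateaus. So your write-up is not merely consistent with the paper; it fills a real gap in the paper's proof. The one small thing worth making explicit when you apply the second inequality with $a^*$ in place of $a$ is that $a^*$ is itself admissible (nonnegative, nondecreasing, bounded by $q_A$, summing to $A$) --- this is immediate from $0\le r_A<q_A$, but it is the hypothesis that licenses the reuse.
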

\begin{proof} Since
$b_1\le b_2\le\cdots\le b_n$, we have
\begin{equation}
\sum_{k=1}^n a_k^* b_{n-k+1} \le \sum_{k=1}^na_kb_{n-k+1}.
\end{equation}
Then, because $a_1^*\le a_2^*\le \cdots \le a_n^*$, we have
\begin{equation}
\sum_{k=1}^n a_k^* b_{n-k+1}^* \le \sum_{k=1}^na_k^*b_{n-k+1},
\end{equation}
and the result follows.
\end{proof}

\begin{lemma}[Extremal sum]\label{sum_lower_bound}
Using the notation of Lemma~\ref{extremal_dist}, if
\begin{equation}
\begin{split}
s&=\lfloor A/q_A\rfloor +\lfloor B/q_B\rfloor+1,\\
r_A&=A-q_A\lfloor A/q_A\rfloor,\\
r_B&=B-q_B\lfloor B/q_B\rfloor,\\
\end{split}
\end{equation}
then we have
\begin{equation}\label{extremal_sum}
\sum_{k=1}^n a_k^* b_{n-k+1}^*=
\begin{cases}
0,&\mbox{if } s< n,\\
r_Ar_B,&\mbox{if } s=n,\\
(s-n-1)q_Aq_B+r_Aq_B+r_Bq_A,
&\mbox{if } s>n.
\end{cases} 
\end{equation}
\end{lemma}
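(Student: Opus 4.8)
The plan is to evaluate the sum $\sum_{k=1}^{n} a_k^* b_{n-k+1}^*$ directly from the piecewise formulas \eqref{a} and \eqref{b}, organizing the computation around the positions at which the two extremal sequences first become nonzero. Writing $\alpha=\lfloor A/q_A\rfloor$ and $\beta=\lfloor B/q_B\rfloor$, so that $s=\alpha+\beta+1$, we recall from \eqref{a} that $a_k^*=0$ for $k<n-\alpha$, that $a_{n-\alpha}^*=r_A$, and that $a_k^*=q_A$ for $k>n-\alpha$, with the analogous statement for $b_k^*$ in terms of $\beta$ and $r_B$.

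First I would reindex, setting $j=n-k+1$, which rewrites the sum as $\sum_{j=1}^{n} a_{n-j+1}^*\,b_j^*$. The factor $a_{n-j+1}^*$ is nonzero exactly when $n-j+1\ge n-\alpha$, i.e.\ when $j\le\alpha+1$, while $b_j^*$ is nonzero exactly when $j\ge n-\beta$. Hence only the indices $j$ with $n-\beta\le j\le\alpha+1$ contribute, and the number of such integers is controlled by the sign of $(\alpha+1)-(n-\beta)=s-n$.

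Next I would carry out the resulting three-case split. If $s<n$ the index set is empty and the sum is $0$. If $s=n$ it reduces to the single index $j=\alpha+1=n-\beta$, at which $a_{n-j+1}^*=a_{n-\alpha}^*=r_A$ and $b_j^*=b_{n-\beta}^*=r_B$, so the sum equals $r_Ar_B$. If $s>n$, the interval $n-\beta\le j\le\alpha+1$ contains at least two integers: at the left endpoint $j=n-\beta$ one gets the term $a_{\beta+1}^*b_{n-\beta}^*=q_Ar_B$ (using $\beta+1>n-\alpha$, which is exactly $s>n$); at the right endpoint $j=\alpha+1$ one gets $a_{n-\alpha}^*b_{\alpha+1}^*=r_Aq_B$; and each of the $s-n-1$ interior indices contributes $q_Aq_B$. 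Adding these gives $(s-n-1)q_Aq_B+r_Aq_B+r_Bq_A$, which is the asserted value.

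The substance of the argument lies entirely in the index bookkeeping, and the only points demanding care are the off-by-one count of the interior indices and the degenerate configurations: if $\alpha=n$ then $r_A=0$ and the index $n-\alpha$ (equivalently, after reindexing, the right endpoint $\alpha+1$) falls outside $\{1,\dots,n\}$, and symmetrically if $\beta=n$ then $r_B=0$ and the left endpoint $n-\beta$ falls outside the range; likewise a residue $r_A$ or $r_B$ may vanish on its own. In every such case the three-case formula still holds verbatim, since an out-of-range boundary index contributes nothing and the matching term in the formula ($q_Ar_B$ or $r_Aq_B$) is then zero as well, so I would dispatch these observations without any further computation.
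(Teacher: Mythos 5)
Your proof is correct and follows essentially the same route as the paper's: both determine the (common) index window on which both factors are nonzero, observe that its length is governed by the sign of $s-n$, and evaluate the sum in the three resulting cases by separating the two boundary terms (contributing $r_Aq_B$ and $r_Bq_A$) from the $s-n-1$ interior terms (each $q_Aq_B$). The only cosmetic difference is that you substitute $j=n-k+1$ while the paper instead rewrites $b_{n-k+1}^*$ as a piecewise function of $k$; your closing paragraph on the degenerate configurations ($\alpha=n$, $\beta=n$, or $r_A=0$, $r_B=0$) is a bit more explicit than the paper, which passes over those boundary cases silently.
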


\begin{proof}
First, note that \eqref{b} yields that
\begin{equation}\label{b_reindex}
b_{n-k+1}^*=
\begin{cases}
q_B, &\mbox{if } k< \lfloor B/q_B\rfloor+1,\\
B-q_B\lfloor \frac{B}{q_B}\rfloor, &\mbox{if }k= \lfloor B/q_B\rfloor+1,\\
0,&\mbox{if } k> \lfloor B/q_B\rfloor+1.\\
\end{cases}
\end{equation}
Then, \eqref{a} and \eqref{b_reindex} together
imply that the summands $a_k^*b_{n-k+1}^*$ in \eqref{extremal_sum}
vanish unless $k$ satisfies
\begin{equation}
n-\lfloor A/q_A\rfloor\le k \le \lfloor B/q_B\rfloor +1.
\end{equation}
This allows the sum in \eqref{extremal_sum} to be written as
\begin{equation}
\sum_{k=1}^n a_k^* b_{n-k+1}^*=\sum_{k=n-\lfloor
  A/q_A\rfloor}^{\lfloor B/q_B\rfloor +1}a_k^* b_{n-k+1}^*.
\end{equation}

The analysis of this sum now requires examining three cases:
$s<n$, $s>n$, and $s=n$.
If
\begin{equation}
s=\lfloor A/q_A\rfloor +\lfloor B/q_B\rfloor +1<n,
\end{equation}
then the sum is empty.
If $s>n$, then the sum consists of a first term, last term,
and $s-n-1$ middle terms.  The first and last terms
contribute $r_Aq_B$ and $r_Bq_A$, respectively, while
the remaining terms each have identical value $q_Aq_B$.

Finally, if $s=n$, the sum consists of a single term
of value $r_Ar_B$.
\end{proof}

\section{Collections of intervals}

In this section, we prove Theorem~\ref{intervals}.
As in the previous section, 
given collections $\mathcal{A}$ and $\mathcal{B}$ of
residue classes modulo $m$ and $n$, respectively,
the classical Chinese Remainder Theorem
reduces counting the number of solutions of the linear
system \eqref{two_int_thm}
to counting the pairs $(a,b)\in\mathcal{A}\times\mathcal{B}$
with $a\equiv b\bmod \gcd(m,n)$.  Unlike in the previous section,
the fact that the collections of residue classes
in Theorem~\ref{intervals} consist of intervals
of classes, rather than arbitrary sets of classes,
substantially simplifies the
analysis of this counting problem. 

\begin{proof}[Proof of Theorem~\ref{intervals}]
With $g=\gcd(m,n)$, we have
\begin{align}
\begin{split}
|\mathcal{A}|&=Ag+r_A,\qquad 0\le r_A<g,\\
|\mathcal{B}|&=Bg+r_B,\qquad 0\le r_B<g.\\
\end{split}
\end{align}
It remains to count the pairs $(a,b)\in\mathcal{A}\times\mathcal{B}$
with $a\equiv b\bmod g$.

Since $\mathcal{A}$ consists of intervals
of residue classes, it follows that $\mathcal{A}$ can
be partitioned into $A$ sub-intervals of length $g$,
each sub-interval containing $g$ distinct elements modulo $g$,
along with a smaller sub-interval of length $r_A$,
containing $r_A<g$ distinct elements modulo $g$.
Similarly,
$\mathcal{B}$ can
be partitioned into $B$ sub-intervals of length $g$,
each sub-interval containing $g$ distinct elements modulo $g$,
along with a smaller sub-interval of length $r_B$,
containing $r_B<g$ distinct elements modulo $g$.

Each of the $Ag$ elements from the $A$ sub-intervals
of length $g$ from the collection $\mathcal{A}$
will agree modulo $g$ with exactly one element
in each of the $B$ sub-intervals of length $g$
from the collection $\mathcal{B}$.
These pairings contribute $ABg$ solutions to \eqref{two_int_thm}.

Each of the $r_A$ elements from the smaller sub-interval
of length $r_A$ will agree modulo $g$ with exactly
one element in each of the $B$ sub-intervals of length $g$
from the collection $\mathcal{B}$.
Similarly, each of the $r_B$ elements from the smaller sub-interval
of length $r_B$ will agree modulo $g$ with exactly
one element in each of the $A$ sub-intervals of length $g$
from the collection $\mathcal{A}$. Together, these
pairings contribute $r_AB+r_BA$ solutions to \eqref{two_int_thm}.

Finally, it might be that the two small sub-intervals
of length $r_A<g$ and $r_B<g$ have no elements that agree
modulo $g$.  Still, by the pigeonhole principle,
there must be at least $r_A+r_B-g$ matches modulo $g$.
These pairings contribute $\min(0,r_A+r_B-g)$ solutions
to \eqref{two_int_thm}.

This yields that, in total, \eqref{two_int_thm} possesses
at least
\begin{equation}
ABg+Ar_B+Br_A+\min(0,r_A+r_B-g)
\end{equation}
solutions,
completing the proof of Theorem~\ref{intervals}.

\end{proof}

\section{The Lonely Runner Conjecture}

The Lonely Runner Conjecture, having its origins in view-obstruction
problems and in diophantine approximation, seems to be
due independently to Wills \cite{Wills} and Cusick \cite{Cusick}.
A problem in $n$-dimensional geometry view obstruction
motivated Cusick's statement of the problem, while
Wills viewed the question from the perspective of
Diophantine approximation.

Let $k\ge 2$ be an integer, and let $m_1,\ldots,m_k$ be distinct
positive integers.
For $x\in\mathbb{R}$, let $\|x\|$ denote the
distance from $x$ to the integer nearest,
i.e.,
\begin{equation}
\|x\| = \min\{|x-n|:n\in \mathbb{Z}\}.
\end{equation}
Montgomery includes the Wills version as Problem 44
in the Diophantine Approximation section
of the appendix of unsolved problems in \cite{HLM}.
\begin{conjecture}(Wills)\label{conj_wills} If $1\le m_1<m_2<\ldots<m_k$
then
\begin{equation}
\max_{\alpha\in\mathbb{R}}\min_{1\le k\le K}\Vert m_k\alpha\Vert\ge\frac{1}{k+1}.
\end{equation}
\end{conjecture}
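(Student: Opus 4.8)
The plan is to recast Conjecture~\ref{conj_wills} as the assertion that a suitable intersection of unions of intervals of residue classes is nonempty, and then to attack that assertion with a multi-collection strengthening of Theorem~\ref{intervals}. First I would reduce to rational runner positions $\alpha = x/N$ for a suitable denominator $N$: the supremum in Conjecture~\ref{conj_wills} is known to be attained at a rational point, and one may clear denominators so that $N$ is divisible by each $m_i$. For each index $i$, the lonely-runner condition $\|m_i\alpha\|\ge\frac{1}{k+1}$ then becomes the requirement that $m_i x \bmod N$ avoid the open arc of relative length $\frac{2}{k+1}$ about $0$, i.e.\ that $x$ lie in an explicit union of intervals of residue classes---call it $\mathcal{A}^{(i)}$---of density $\frac{k-1}{k+1}$ in the appropriate modulus. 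Conjecture~\ref{conj_wills} is thus equivalent to $\bigcap_{i=1}^k \mathcal{A}^{(i)} \ne \emptyset$.

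Next I would iterate the counting argument used in the proof of Theorem~\ref{intervals}. One first intersects $\mathcal{A}^{(1)}$ with $\mathcal{A}^{(2)}$---the passage from single intervals to finite unions of intervals only multiplies the bookkeeping---and records not merely the size of the surviving set but its distribution among residue classes modulo the running gcd, exactly the data controlled by the function $f$ of \eqref{f}. One then intersects with $\mathcal{A}^{(3)}$, and so on through all $k$ steps. Since $\frac{k-1}{k+1}>\frac13$ for $k\ge 3$, the hypothesis of Corollary~\ref{densitycor} is comfortably met at the first step; the hope is that an inductive version of that corollary, with a density threshold that relaxes as collections are added, keeps the intersection nonempty to the end.

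The main obstacle---and the reason Conjecture~\ref{conj_wills} remains open---is precisely this inductive step. A crude union bound over the $k$ forbidden arcs gives total forbidden density $\frac{2k}{k+1}$, which tends to $2$ rather than remaining below $1$, so pairwise CRT information, however sharp, cannot suffice on its own. Theorem~\ref{intervals} pins down how two interval collections interlock modulo their gcd, but iterating it requires understanding the joint distribution of the surviving residues against a fresh modulus $m_{i+1}$ whose arithmetic relation to $m_1,\dots,m_i$ is uncontrolled, and the extremal distributions of Lemmas~\ref{extremal_dist} and~\ref{sum_lower_bound} show that this joint control can genuinely deteriorate across stages.

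I would therefore expect the present methods to yield Conjecture~\ref{conj_wills} unconditionally only for restricted families---for example when $m_1,\dots,m_k$ share a common divisor that is large relative to $k$, so that every relevant gcd is large and the slack in Theorem~\ref{intervals} is ample---while the general case would demand a genuinely new ingredient, such as a Fourier-analytic or covering-system input, layered on top of the residue-class counting developed here. Accordingly, rather than a proof of the full conjecture I would aim only to extract from Theorem~\ref{intervals} and Corollary~\ref{densitycor} the sharpest special cases the density bound permits, and to state precisely the multi-collection intersection inequality whose proof would close the remaining gap.
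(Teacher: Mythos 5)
The statement you were asked to prove is an \emph{open conjecture}; the paper does not prove it, and neither do you---you correctly recognize this and offer a research outline rather than a proof, which is the right call.

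Your reduction to residue-class intervals matches what the paper actually does in Section~5: the forbidden open arc about $0$ has relative length $\frac{2}{k+1}$, the allowed set $T_Q(m_i)$ is an interval of residue classes of density slightly exceeding $\frac{k-1}{k+1}$ once $(k+1)m_i \mid Q$, and for $k=2$ the paper takes $Q = 3mn$ and observes that $|\mathcal{A}| = n+1 > \tfrac13 \cdot 3n$, so Corollary~\ref{densitycor} applies. Your identification of the obstruction---that Theorem~\ref{intervals} and Corollary~\ref{densitycor} are two-modulus statements, and that no multi-congruence analogue is available to iterate---is exactly the gap the paper itself flags in the ``Further work'' section (``results for systems containing more than two linear congruence equations would be both interesting and useful''). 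Your observation that the crude union bound of total forbidden density $\tfrac{2k}{k+1} \to 2$ cannot close the argument is also a fair diagnosis of why pairwise control does not suffice.

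Two small corrections. First, the boundary case is $k=2$, where the density is exactly $\tfrac{k-1}{k+1} = \tfrac13$; the paper gets strict inequality only because the closed condition $\|m_i t\| \ge \tfrac{1}{k+1}$ yields $n+1$ rather than $n$ classes. Your parenthetical ``$\frac{k-1}{k+1} > \frac13$ for $k\ge 3$'' is true but beside the point, since $k \ge 3$ is precisely the range where the paper's machinery has not been extended. Second, your description of ``intersecting $\mathcal{A}^{(1)}$ with $\mathcal{A}^{(2)}$ and recording the distribution via $f$'' sketches an iteration the paper never carries out and for which Lemma~\ref{extremal_dist} gives no multi-stage guarantee; it would be clearer to present this explicitly as the missing lemma rather than as a step whose ``bookkeeping'' merely ``multiplies.'' With those caveats, your assessment is consistent with the paper's own treatment of the conjecture.
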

Goddyn, one of the authors of \cite{Goddyn}, gave
the problem a memorable name and interpretation
concerning runners on a circular track.
\begin{conjecture}[Lonely Runner Conjecture]\label{conj_lrc}
Suppose that $k$ runners having positive integer speeds run
laps on a unit-length circular track.  
Then there is a time at which all $k$ runners are at distance at least
$1/(k+1)$ from their common starting point.
\end{conjecture}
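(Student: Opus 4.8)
Since Conjecture~\ref{conj_lrc} is the Lonely Runner Conjecture, which is open, the following is a plan for attacking it through the density results of this paper, together with an indication of the essential obstruction. The plan is to route the problem into a Chinese-Remainder question about intervals of residue classes and then to apply a $k$-fold analogue of Theorem~\ref{intervals}. First I would pass to the Diophantine formulation: Conjecture~\ref{conj_lrc} is equivalent to Conjecture~\ref{conj_wills} (directly, or after passing to an observer runner's frame), namely that for distinct positive integers $m_1,\dots,m_k$ there is a real $\alpha$ with $\|m_i\alpha\|\ge\tfrac1{k+1}$ for every $i$. It is standard that this is in turn equivalent to its rational form: it suffices to find a positive integer $N$ divisible by every $m_i$ (and, if we wish, as large as we please) together with an integer $a$ satisfying $\|m_i a/N\|\ge\tfrac1{k+1}$ for all $i$. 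Writing $n_i=N/m_i$, the value $\|m_i a/N\|=\|a/n_i\|$ depends only on $a\bmod n_i$, and the inequality for index $i$ says exactly that $a\bmod n_i$ lies in the interval of residue classes
\begin{equation*}
I_i=\left\{\,r\in\mathbb{Z}/n_i\mathbb{Z}:\bigl\|\tfrac{r}{n_i}\bigr\|\ge\tfrac1{k+1}\,\right\},\qquad |I_i|\ge\tfrac{k-1}{k+1}\,n_i-1.
\end{equation*}
Thus the plan reduces Conjecture~\ref{conj_lrc} to the statement that $k$ intervals of residue classes, the $i$-th modulo $n_i$ and each of relative density at least $\tfrac{k-1}{k+1}-o(1)$, always admit a common Chinese-Remainder solution.

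For $k=2$ this is precisely the scope of Theorem~\ref{intervals}: with $g=\gcd(n_1,n_2)$ it yields at least $\asymp|I_1||I_2|/g$ common solutions modulo $n_1n_2/g$, so the proportion of admissible $a$ is $\asymp|I_1||I_2|/(n_1n_2)$, which is bounded below because each $I_i$ has density about $\tfrac13$; this recovers the classical two-speed case of Conjecture~\ref{conj_wills} (cf.~Corollary~\ref{densitycor} and the sharpness example there). For general $k$ the natural plan is to iterate: apply Theorem~\ref{intervals} to $I_1$ and $I_2$ to obtain a large set $S$ of admissible residues modulo $\mathrm{lcm}(n_1,n_2)$, then cut down to those $a\in S$ with $a\bmod n_3\in I_3$, and continue, invoking a density estimate at each stage to keep the running set nonempty. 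On a purely heuristic count this looks affordable: if the $j$-th step cost only a factor equal to the density of the new constraint, the surviving proportion after all $k$ steps would be about $\bigl(\tfrac{k-1}{k+1}\bigr)^{k}$, which stays bounded away from $0$ (indeed it tends to $e^{-2}$), so at this level the bound $\tfrac1{k+1}$ is exactly the right size.

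The hard part will be that this iteration genuinely fails, and seeing why is the heart of the matter. After the first step, $S$ is a union of intervals modulo $\mathrm{lcm}(n_1,n_2)$ but is not itself a single interval of residue classes, so Theorem~\ref{intervals} no longer applies to the pair $(S,I_3)$ and one is thrown back on Theorem~\ref{arbitrary}, whose lower bound becomes positive only once the two densities exceed $1$ in sum --- that is, it improves on nothing beyond the trivial union bound for the complements. Iterating such an estimate across $k$ constraints of density $\tfrac{k-1}{k+1}$ would require $k\cdot\tfrac{k-1}{k+1}-(k-1)>0$, which is false for every $k\ge2$. What the argument actually demands is a genuinely $k$-fold density Chinese Remainder theorem for intervals: a lower bound on the number of $a$ meeting all of the prescribed intervals $I_i\bmod n_i$ simultaneously that exploits the interval structure of all $k$ constraints at once --- and presumably also the special features of the Lonely Runner setup, namely that the complementary ``bad'' intervals are all centred at the origin and that the moduli $n_i=N/m_i$ are rigidly linked --- rather than degrading to pairwise or union-bound estimates. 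Producing such a theorem, strong enough to stay positive at per-constraint density $\tfrac{k-1}{k+1}$, is tantamount to proving Conjecture~\ref{conj_lrc}; that is the single step at which new ideas are required, and it is why the conjecture remains open.
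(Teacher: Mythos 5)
This statement is the Lonely Runner Conjecture, which the paper records as an open conjecture and does not prove, and your proposal correctly refrains from claiming a proof, so there is no paper argument to measure it against. Your outline --- pass to rational times with a common denominator, translate each runner's constraint into an interval of residue classes, settle $k=2$ via Theorem~\ref{intervals} and Corollary~\ref{densitycor} (where the paper's careful $A=B=0$ case analysis, which your ``$\asymp|I_1||I_2|/g$'' shorthand glosses over, does the real work), and observe that general $k$ needs a genuinely $k$-fold density CRT for intervals rather than an iteration that degrades to Theorem~\ref{arbitrary} --- is essentially the same route the paper itself sketches in its Lonely Runner section and explicitly defers to future work, and your diagnosis of why the naive iteration fails is accurate.
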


The formulation of this problem by Wills and Cusick 
led to a considerable body of work on the Lonely Runner
Conjecture and its various incarnations and applications
in Diophantine approximation (see \cite{Betke}), view-obstruction problems (see
\cite{Cusick} and \cite{Pomerance}),
nowhere zero flows in regular matroids (see \cite{Goddyn}), and
certain graph coloring questions (e.g., \cite{Zhu}, \cite{Liu}).  

At time $t$, runner $i$ has position at distance $\|m_it\|$ from the starting
point.
Call a runner
\emph{distant} at time $t$ if $\|m_it\|\ge \frac{1}{k+1}$.
Given an instance of the lonely runner problem,
we would like to know if there is a time at which
all runners are distant.

To connect this problem to the density Chinese Remainder
Theorem, we proceed by first moving
from its formulation over $\mathbb{R}$
to one over $\mathbb{Q}$, and then
we examine rationals with a suitable
fixed denominator (a function of
the number of runners $k$ and their
speeds $m_1,\ldots,m_k$).

Towards that end, consider a single runner with speed $m$.
We seek to investigate 
the set
\begin{equation}\label{runner set definition}
T(m)=\{t\in \mathbb{R}:\|mt\|\ge\frac{1}{k+1}\},
\end{equation}
the times $t$ at which
this runner is distant.
To understand this set of times, for
an integer $Q\ge 1$, let
\begin{equation}
T_Q(m)=\{t\in T(m): t=a/Q, 0\le a\le Q-1\}.
\end{equation}
Imposing the condition that $(k+1)m$ divides $Q$ ensures
that this set captures the extremal times where equality holds
in \eqref{runner set definition}.  For such $Q$,
the numerators of the times in $T_Q(m)$ correspond
to residue classes modulo $Q$.  These residue classes themselves
are induced by an interval of residue classes modulo $Q/m$.

Hence, for $k$ runners with speeds $m_1,\ldots,m_k$,
we select an appropriate choice of $Q$, and
we seek to find at least one time $t$ that belongs
to each of $T_Q(m_1),\ldots, T_Q(m_k)$.  By the above
remarks, the problem reduces to finding a common solution
to a system of linear congruence equations. 

For example, suppose that $k=2$, with runner
speeds $m$ and $n$. If we set $Q=3mn$,
then the times $T_Q(m)$ are induced by an interval $\mathcal{A}$
of residue classes modulo $3n$, and the times
$T_Q(n)$ are induced by an interval $\mathcal{B}$
of residue classes modulo $3m$.
Specifically,
\begin{equation}
T_Q(m)=\{\frac{a}{3n}:\|\frac{a}{3n}\|\ge\frac{1}{3}, 0\le a\le 3mn-1\},
\end{equation}
and
$\mathcal{A}$ consists of $n+1>\frac{1}{3}\cdot 3n$ residue
classes, the interval of residue classes
\begin{equation}
\{a \bmod 3n: n\le a\le 2n\},
\end{equation}
with analogous statements holding for $T_Q(n)$ and $\mathcal{B}$.
Thus,
Corollary \ref{densitycor} applies, yielding a solution
to the linear system of congruences and hence the
existence of a time belonging to both $T_Q(m)$ and $T_Q(n)$.

\section{Further work}

First, as with the classical Chinese Remainder Theorem,
results for systems containing more than two linear
congruence equations would be both interesting and useful.
In particular, such a generalization in the case
of collections of intervals can be applied to the Lonely
Runner Conjecture.
For instance,
the constant $\frac{1}{3}$ appearing in Corollary \ref{densitycor}
is the analogue of the same constant appearing in the
Lonely Runner Conjecture with $k=2$ runners.
One might hope for such similarities to continue
for larger values of $k$.

Next, other possibilities for the collections $\mathcal{A}$
and $\mathcal{B}$,
for example, random collections (for suitable notions
of random), collections nicely distributed among
residue classes, and collections with other arithmetic
structure might be amenable to analysis.

Finally, regardless of the number of linear congruence
equations or the types of collections involved,
structural information beyond mere existence
and quantity for the admissible classes
and the resulting solutions could be useful
in iterated applications of this, and other, density Chinese
Remainder Theorems.


\end{document}